\newtheorem{proposition}{Proposition}
\newtheorem{theorem}{Theorem}
{\theorembodyfont{\rmfamily}
\newtheorem{definition}{Definition}}
{\theorembodyfont{\rmfamily}
\newtheorem{example}{Example}}
{\theorembodyfont{\rmfamily}
\newtheorem{remark}{Remark}}
{\theorembodyfont{\rmfamily}
\newtheorem{conjecture}{Conjecture}}
\newsavebox{\proofbox}
\savebox{\proofbox}{%
  \begin{picture}(7,7)\put(0,0){\framebox(7,7){}}\end{picture}%
}
\newenvironment{proof}{%
  \list{}{\leftmargin0pt
    \rightmargin\leftmargin}%
  \item[]{\hspace*{1em}\it Proof.\ }%
}
{\hspace*{\fill}{\usebox{\proofbox}}\endlist}
\begin{document}
\frenchspacing
%%%%%%%%%%%%%%%%%%%%%%%%%%%%%%%%%%%%%%%%%%%%%%%%%%%%%%%%%%%%%%
\vskip 3cm
\begin{center}
\LARGE Research talk given for the CALF-seminar, Oxford 23.6.2005
\end{center}
\vskip 2cm
\begin{center}
\begin{tabular}{l}
\Large On the Classification of Generalized Cartan Matrices of Rank $3$
\end{tabular}
\end{center}
\vskip6cm
\begin{flushright}
\begin{tabular}{l}
Kyriakos Papadopoulos \\
kxp878@bham.ac.uk
\end{tabular}
\end{flushright}
\vskip2cm
\begin{flushright}
\begin{tabular}{l}
\end{tabular}
\end{flushright}
%%%%%%%%%%%%%%%%%%%%%%%%%%%%%%%%%%%%%%%%%%%%%%%%%%%%%%%%%%%%%%
\chapter*{Foreword}\pagenumbering{arabic}
%%%%%%%%%%%%%%%%%%%%%%%%%%%%%%%%%%%%%%%%%%%%%%%%%%%%%%%%%%%%%%
In this talk we describe very briefly how Gritsenko and Nikulin classified in \cite{3}
the generalised Cartan matrices of rank 3, of elliptic type, which
are twisted to symmetric generalised Cartan matrices. We will then introduce a conjecture on this
classification and propose an algorithm for attacking it.

The author would like to thank prof. V.V. Nikulin for giving him
the opportunity to study this subject.
%%%%%%%%%%%%%%%%%%%%%%%%%%%%%%%%%%%%%%%%%%%%%%%%%%%%%%%%%%%%%%
\tableofcontents
%%%%%%%%%%%%%%%%%%%%%%%%%%%%%%%%%%%%%%%%%%%%%%%%%%%%%%%%%%%%%%

%%%%%%%%%%%%%%%%%%%%%%%%%%%%%%%%%%%%%%%%%%%%%%%%%%%%%%%%%%%%%%
\section[Reflection Groups of Integral Hyperbolic Lattices and Generalised Cartan Matrices]{Reflection Groups of Integral Hyperbolic Lattices and Generalised Cartan Matrices\footnotemark}
\footnotetext{The presentation of this section is based on~\cite{1},~\cite{2}and~\cite{3}.}
%%%%%%%%%%%%%%%%%%%%%%%%%%%%%%%%%%%%%%%%%%%%%%%%%%%%%%%%%%%%%%
In this section we will give all the basic definitions on generalised Cartan matrices, and show how they
are related to reflection groups of integral hyperbolic lattices.

\begin{definition}
For a countable set of indices $I,$ a finite-rank matrix $A=(a_{ij})$ is called {\em generalised Cartan matrix}, if and
only if:
\begin{enumerate}

\item $a_{ii}=2\,;$

\item $a_{ij} \in \mathbb{Z}^-,\, i \neq j$ and

\item $a_{ij}=0 \Rightarrow a_{ji}=0\,.$

\end{enumerate}

We consider such a matrix to be {\em indecomposable}, i.e. there
does not exist a decomposition $I=I_1\, \cup \, I_2,$ such that
$I_1 \neq \emptyset,\,I_2 \neq \emptyset$ and $a_{ij}=0,$ for $i
\in I_1,\,j \in I_2.$
\end{definition}

\begin{definition}
A generalised Cartan matrix $A$ is {\em symmetrizable}, if there exists invertible diagonal matrix
$D=\textrm{diag}(\ldots\epsilon_i \ldots)$ and symmetric matrix $B=(b_{ij}),$ such that:
\[A=DB \textrm{ or } (a_{ij})=(\epsilon_i\,b_{ij}),\] where $\epsilon_i \in \mathbb{Q},\,\epsilon_i >0,\,b_{ij} \in \mathbb{Z}^-,\,
b_{ii} \in 2 \mathbb{Z}$ and $b_{ii}>0.$

The matrices $D$ and $B$ are defined uniquely, up to a multiplicative constant, and $B$ is called {\em symmetrised
generalised Cartan matrix}.
\end{definition}

\begin{remark}
A symmetrizable generalised Cartan matrix $A=(a_{ij})$ and its symmetrised generalised Cartan matrix $B=(b_{ij})$
are related as follows:
\[(a_{ij})= \Bigl({{2\,b_{ij}} \over {b_{ii}}}\Bigr),\] where $b_{ii} | 2\,b_{ij}.$
\end{remark}

\begin{definition}
A symmetrizable generalised Cartan matrix $A$ is called {\em hyperbolic}, if its symmetrised generalised Cartan matrix
$B$ has exactly one negative square (or $A$ has exactly one negative eigenvalue).
\end{definition}

Let us now recall the definition for {\em hyperbolic integral
quadratic form} $S:$
\begin{definition}
A {\em hyperbolic integral symmetric bilinear form} (or
hyperbolic integral quadratic form) on a finite rank free $\mathbb{Z}-$
module $M,$ of dimension $n,$ over the ring of integers,
is a map:
\[S:M \times M \to \mathbb{Z},\]
satisfying the following conditions:

\begin{enumerate}

\item $S(\alpha\,m_1+\beta\,m_2,m_3)=\alpha\,S(m_1,m_3)+\beta\,S(m_2,m_3);$

\item $S(m_3,\alpha\,m_1+\beta\,m_2)=\alpha\,S(m_3,m_1)+\beta\,S(m_3,m_2);$

\item $S(m_1,m_2)=S(m_2,m_1)$ (symmetry) and

\item signature=$(n,1);$ i.e. in a suitable basis, the corresponding matrix of $S$ is a diagonal matrix
with $n$ positive squares and one negative square in the diagonal,
\end{enumerate}
where $\alpha,\,\beta \in \mathbb{Z}$ and $m_1,m_2,m_3 \in M.$
\end{definition}

We now consider an {\em integral hyperbolic lattice} $(M,S),$ i.e. a pair of a free $\mathbb{Z}-$module $M$ and
a hyperbolic integral symmetric bilinear form $S.$

By considering the corresponding cone:
\[V(M)=\{x \in M \otimes \mathbb{R}:(x,x)<0\},\]
and by choosing its half-cone $V^+(M),$ we can define the corresponding {\em hyperbolic (or Lobachevskii) space}:
\[\Lambda^+(M)=V^+(M)/\mathbb{R}^{++},\] as a section (slice) of the cone (a cone is by definition the set of rays with
origin at zero), by a hyperplane.

After defining the hyperbolic space, we can work in hyperbolic geometry, by defining the {\em distance} $\rho$ between
two points $\mathbb{R}^{++}x$ and $\mathbb{R}^{++}y$ in $\Lambda^+(M),$ as follows:
\[cosh\,\rho\,(\mathbb{R}^{++}x,\mathbb{R}^{++}y)= \frac{-S(x,y)}{\sqrt{S(x,x)S(y,y)}}\]
Obviously these two points in hyperbolic space are rays in the half-cone $V^+(M).$

\begin{remark}
By definition, when we use signature $(n,1),$ the square of a vector which
is outside the cone $V(M)$ is strictly greater than zero and the square of a vector which is
inside the cone is strictly negative. Furthermore, if a vector lies
on the surface of the cone, then its square is zero. Obviously, we put a minus sign in the numerator
of the definition of distance in hyperbolic space, because the hyperbolic cosine should be always positive.
\end{remark}

Now, each element $\alpha_i \in M \otimes \mathbb{R},$ with $(\alpha_i,\alpha_i)>0,$
defines the half spaces:
\[H_{\alpha_i}^+ = \{\mathbb{R}^{++}x \in \Lambda^{+}(M):(x,\alpha_i) \leq 0\},\]
\[H_{\alpha_i^-}^+ = \{\mathbb{R}^{++}x \in \Lambda^{+}(M):(x,\alpha_i) > 0\},\]
which are bounded by the hyperplane:
\[H_{\alpha_i}=\{\mathbb{R}^{++}x \in \Lambda^+(M):(x,\alpha_i)=0\},\]
where $\alpha_i \in M \otimes \mathbb{R}$ is defined up to multiplication
on elements of $\mathbb{R}^{++}.$ The hyperplane $H_{\alpha_i}$ is also
called {\em mirror of symmetry}.

Let us denote by $O(M)$ the {\em group of automorphisms}, which preserves
the cone $V(M).$ Its subgroup $O^+(M) \subseteq O(M)$ is of index $2,$ and
fixes the half-cone $V^+(M).$ Furthermore, $O^+(M)$ is discrete in $\Lambda^+(M),$
and has fundamental domain of finite volume.

\begin{proposition}
The index $[O(M):O^+(M)]$ is always equal to $2.$
\end{proposition}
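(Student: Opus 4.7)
The plan is to exhibit $O^+(M)$ as the kernel of a surjective homomorphism $O(M) \to \mathbb{Z}/2\mathbb{Z}$, so that the index is $2$ by the first isomorphism theorem.

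First I would observe that since $S$ has signature $(n,1)$, the open cone $V(M) = \{x : S(x,x) < 0\}$ has exactly two connected components, namely the half-cones $V^+(M)$ and $V^-(M) = -V^+(M)$. Any element $g \in O(M)$ acts as a homeomorphism on $V(M)$, so it permutes these two connected components. This yields a well-defined group homomorphism $\sigma : O(M) \to \mathrm{Sym}\{V^+(M), V^-(M)\} \cong \mathbb{Z}/2\mathbb{Z}$, whose kernel is exactly $O^+(M)$ by definition.

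To conclude it remains to check that $\sigma$ is surjective, i.e.\ to exhibit an element of $O(M)$ that interchanges the two half-cones. The map $-\mathrm{id} : M \to M$ preserves $S$ (hence lies in $O(M)$) and sends $V^+(M)$ to $-V^+(M) = V^-(M)$, so $\sigma(-\mathrm{id})$ is the nontrivial element of $\mathbb{Z}/2\mathbb{Z}$. Therefore $\sigma$ is surjective, and the first isomorphism theorem gives $[O(M):O^+(M)] = |\mathrm{Im}\,\sigma| = 2$.

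There is no real obstacle here; the only point that deserves care is the claim that $V(M)$ has precisely two connected components, which follows from the signature hypothesis (in a diagonalising basis the equation $-x_0^2 + x_1^2 + \cdots + x_n^2 < 0$ forces $x_0 \neq 0$, and the sign of $x_0$ distinguishes the two components). Once this is in hand, the existence of the swap $-\mathrm{id}$ is immediate and the argument closes.
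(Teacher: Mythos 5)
Your proof is correct and is essentially the same argument the paper gives: you view $O^+(M)$ as the kernel of the action of $O(M)$ on the two half-cones (so the index is at most $2$) and use $-\mathrm{id}$ to show the action is nontrivial (so it is exactly $2$). Your write-up merely adds explicit detail, such as verifying that $V(M)$ has two connected components and invoking the first isomorphism theorem.
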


\begin{proof}
The group $O(M)$ acts on two elements which are the half cones $V^+$ and its
opposite $V^-$. So, the kernel of this action, $O^+(M),$ cannot have
index greater than $2.$ Also, the element $-1,$ from $O(M),$ changes $V^+$ with $V^-$.
Thus the index is exactly equal to $2$.
\end{proof}

\begin{definition}
For $(\alpha_i,\alpha_i)>0,$ by $s_{a_i} \in O^+(M)$ we define {\em reflection}
in a hyperplane $H_{\alpha_i},$ of $\Lambda^+(M),$ as follows:
\[s_{\alpha_i}(x)=x- \frac{2(x,\alpha_i)}{(\alpha_i,\alpha_i)}\alpha_i,\]
where $x \in M$ and $\alpha_i \in M.$
\end{definition}

\begin{remark}
Why this equation, that we gave for reflection in our hyperbolic lattice, works? Answer: because of
the following two facts:

\begin{enumerate}

\item for $x=\alpha_i,$ we get that $s_{\alpha_i}(\alpha_i)=-\alpha_i$ and

\item for $x$ perpendicular to $\alpha_i,$ we have that $s_{\alpha_i}(x)=x,$

both of which show that our formula cannot work if we omit number $2$ from the numerator.
\end{enumerate}

An obvious remark is that a reflection $s_{a_i}$ changes place between the half-spaces $H_{a_i}^+$ and $H_{a_i^-}^+.$
\end{remark}

Going a bit further, if an orthogonal vector $\alpha_i \in M,\,(\alpha_i,\alpha_i)>0,$ in a hyperplane $H_{a_i}$ of $\Lambda^+(M),$ is a
{\em primitive root}, i.e. its coordinates are coprime numbers, then:
\[\frac{2(M,\alpha_i)}{(\alpha_i,\alpha_i)}\alpha_i \subseteq M \Leftrightarrow (\alpha_i,\alpha_i)|\,2(M,\alpha_i)\]

\begin{definition}
Any subgroup of $O(M)$ (the corresponding {\em discrete} group of motions of $\Lambda(M)$), generated
by reflections, is called {\em reflection group}.

We denote by $W(M)$ the subgroup of $O^+(M)$ generated by {\em all} reflections of $M,$  of elements with positive
squares (always for signature $(n,1)$).

We will also denote by $W$ the subgroup of $W(M)$ generated by reflections in a set of elements of $M.$
Obviously, $W \subseteq W(M) \subseteq O^+(M)$ is a subgroup of finite index.
\end{definition}

\begin{definition}
A lattice $M$ is called {\em reflective}, if index $[O(M):W(M)]$ is finite. In other words, $W(M)$ has fundamental
polyhedron of finite volume, in $\Lambda(M).$
\end{definition}

Talking a bit more about lattices, we consider again our integral
hyperbolic lattice $S: M \times M \to \mathbb{Z}.$ For $m \in
\mathbb{Q}$ we denote by $S(m)$ the lattice which one gets, if
multiplying $S$ by $m.$ If $S$ is reflective, then $S(m)$ is
reflective, too.

Furthermore, $S$ is called {\em even lattice}, if $S(x,x)$ is even, $x \in M. $ Otherwise, $S$ is
called $odd.$

Last, but not least, $S$ is called {\em primitive lattice} (or {\em even primitive}), if
$S(\frac{1}{m})$ is not lattice (or even lattice), $m \in \mathbb{N},\,m \ge 2.$

\begin{definition}
A convex polyhedron $\cal M,$ in $\Lambda(M),$ is an intersection:
\[{\cal{M}}=\bigcap_{\alpha_i} H_{\alpha_i}^+\]
of several half-spaces orthogonal to elements $\alpha_i \in M,\,(\alpha_i,\alpha_i)>0.$

This convex polyhedron is the fundamental chamber for a reflection group $W(M),$ with
reflections generated by (primitive) roots, in $M,$ each of them orthogonal to exactly
one side of this polyhedron. Moreover, we get this chamber if we remove all
mirrors of reflection, and take the connencted components of the complement (with
the boundary). The fundamental chamber acts {\em simply transitively}, because
if we consider an element $w \in W(M),$ then $w({\cal{M}}_1)={\cal{M}}_2,$ in other
words it fills in our (hyperbolic) space with congruent polyhedra.

The polyhedron $\cal{M}$ belongs to the cone:
 \[\mathbb{R}^+{\cal{M}}=\{ x \in V^+(M):(x,\alpha_i) \leq 0\},\]
where $\alpha_i \in P({\cal{M}})=\{\alpha_i: i \in I\}$; set of orthogonal vectors
to ${\cal{M}}$, where exactly one element $\alpha_i$ is orthogonal to each face of ${\cal{M}}.$

We say that $P({\cal{M}})$ is {\em acceptable}, if each of its elements is a (primitive) root,
which is perpendicular to exactly one side of a convex polyhedron, in $\Lambda^+(M).$

Also, ${\cal{M}}$ is {\em non-degenerate}, if it contains a non-empty open subset of $\Lambda^+(M)$
and {\em elliptic}, if it is a convex envelope of a finite set of points in $\Lambda^+(M)$ or at infinity
of $\Lambda^+(M).$
\end{definition}

Let us now return back to the theory for generalised Cartan
matrices, and relate it with the material that we introduced for
reflection groups of integral hyperbolic lattices.

\begin{definition}
An indecomposable hyperbolic generalised Cartan matrix A is equivalent to a triplet:
\[A \sim (M,W,P(\mathcal{M})),\] where $S : M \times M \to \mathbb{Z}$ is a hyperbolic integral symmetric bilinear
form, $W \subseteq W(M) \subseteq O^+(M),$ $W$ is a subgroup of reflections in a set of elements
of $M,$ with positive squares, $W(M)$ is the subgroup of reflections in all elements of $M$ (with positive
squares), $O^+(M)$ is the group of automorphisms, which fixes the half-cone $V^+(M),$ and $P(\mathcal{M})=
\{\alpha_i: i \in I\},\,(\alpha_i,\alpha_i)>0,$ $A=\Bigl( 2{{(\alpha',\alpha)} \over {(\alpha ,\alpha)}} \Bigr),$ where
$\alpha,\,\alpha' \in P(\mathcal{M}),\,\mathcal{M}$ is a locally finite polyhedron in $\Lambda^+(M),\,
{\cal{M}}=\bigcap_{\alpha_i} H_{\alpha_i}^+$ and $H_{\alpha_i}^+ = \{\mathbb{R}^{++}x \in \Lambda^{+}(M):(x,\alpha_i) \le 0\}.$

The triplet $(M,W,P(\mathcal{M}))$ is called {\em geometric realisation of A}.
\end{definition}

Let us now consider $\lambda(\alpha) \in \mathbb{N},\,\alpha \in P(\mathcal{M})$ and gcd$(\{\lambda(\alpha):\alpha \in P(\mathcal{M})\})=1.$

$\tilde{{\alpha}}=\lambda(\alpha)\,\alpha,\,(\tilde{{\alpha}},\tilde{{\alpha}})|\,2(\tilde{{\alpha}}',\tilde{{\alpha}}) \Leftrightarrow
\lambda(\alpha)(\alpha,\alpha)|\,2 \lambda(\alpha')(\alpha',\alpha)$

Then, $\tilde{A}=\Bigl(2(\tilde{{\alpha}}',\tilde{{\alpha}}) /
(\tilde{{\alpha}},\tilde{{\alpha}})\Bigl)=\Bigl(2
\lambda(\alpha')({\alpha}',{\alpha}) /
\lambda(\alpha)({\alpha},{\alpha})\Bigr),\,\alpha,\alpha' \in
P(\mathcal{M}),$ where $\tilde{A}$ is a {\em twisted, to} A,
hyperbolic generalised Cartan matrix and $\lambda(\alpha)$ are
called {\em twisted coefficients of} $\alpha.$

Also, $\tilde{A}=(\tilde{M},\tilde{W},\tilde{P}(\tilde{\mathcal{M}}))=(M \supseteq[\{\lambda(\alpha)\,\alpha: \alpha \in P(\mathcal{M})\}\,],W,
\{\lambda(\alpha)\,\alpha:\alpha \in P(\mathcal{M})\}).$ In other words, $W$ and $\mathcal{M}$ are the same for $\tilde{A}$ and $A.$

Obviously, $A$ is {\em untwisted}, if it cannot be twisted to any generalised Cartan matrix, different from itself.

Our purpose is to work on hyperbolic generalised Cartan matrices of {\em elliptic type}, so we need to introduce
some more material:

\begin{definition}
Let $A$ be a hyperbolic generalised Cartan matrix, $A \sim (M,W,P(\mathcal{M})).$ We define the {\em group of symmetries of} $A$ (or
$P(\mathcal{M}))$ as follows:
\[\textrm{Sym }(A)=\textrm{Sym }(P(\mathcal{M}))=\{g \in O^+(M):g(P(\mathcal{M}))=P(\mathcal{M})\}\]
\end{definition}

\begin{definition}
A hyperbolic generalised Cartan matrix $A$ has {\em restricted
arithmetic type}, if it is not empty and the semi-direct product
of $W$ with Sym$(A),$ which is equivalent to the semi-direct
product of $W$ with Sym$(P(\mathcal{M})),$ has finite index in
$O^+(M).$
\end{definition}

\begin{remark}
For $W=(w_1,s_1),\,S=(w_2,s_2)\,\mathbb{Z}-$modules, the
semi-direct product of $W$ with $S$ is equal to $((s_2 w_1)w_2,s_1
s_2).$
\end{remark}

\begin{definition}
A hyperbolic generalised Cartan matrix $A$ has {\em lattice Weyl vector}, if there exists $\rho \in M \otimes \mathbb{Q},$
such that:
\[(\rho,\alpha)=-(\alpha,\alpha)/2,~\alpha \in P(\mathcal{M})\]

Additionally, $A$ has {\em generalised lattice Weyl vector}, if there exists $\bold{0} \neq \rho \in M \otimes \mathbb{Q},$ such that
for constant $N >0:$
\[0 \le -(\rho,\alpha) \le N\]

We can think of $\rho$ in $\Lambda^+(M)$ as being the centre of the inscribed circle to $\mathcal{M},$ where $\mathcal{M}$ is
the fundamental chamber of a reflection group $W.$
\end{definition}

And we can now give the last, and very important, definition of this introductory section:

\begin{definition}
A hyperbolic generalised Cartan matrix $A$ has {\em elliptic type}, if it has restricted arithmetic type and generalised
lattice Weyl vector $\rho,$ such that $(\rho,\rho)<0.$ In other words, $[O(M):W]<\infty$ or vol$(\mathcal{M})<\infty$ or
$P(\mathcal{M})< \infty.$
\end{definition}

%%%%%%%%%%%%%%%%%%%%%%%%%%%%%%%%%%%%%%%%%%%%%%%%%%%%%%%%%%%%%%
\section[The Classification of Generalised Cartan Matrices of Rank 3, of Elliptic Type, with the Lattice Weyl
Vector, which are twisted to Symmetric Generalised Cartan Matrices]{The Classification of Generalised Cartan Matrices
of Rank 3, of Elliptic Type, with the Lattice Weyl Vector, which are twisted to Symmetric Generalised Cartan Matrices\footnotemark}
\footnotetext{The presentation of this section is based on~\cite{3}.}
%%%%%%%%%%%%%%%%%%%%%%%%%%%%%%%%%%%%%%%%%%%%%%%%%%%%%%%%%%%%%%
In this section we are going to describe very briefly how Gritsenko and Nikulin classified in \cite{3}
the generalised Cartan matrices of rank 3, of elliptic type (so they have the generalised lattice Weyl vector), which
are twisted to symmetric generalised Cartan matrices. We will then introduce a conjecture on this
classification and propose an algorithm for attacking it.

Let $A$ be a generalised Cartan matix of elliptic type, twisted to a symmetric generalised Cartan matrix $\tilde{A}.$This
automatically implies that $\tilde{A}$ is of elliptic type, too. Let also $G(A)=(M,W,P(\mathcal{M}))$ be the geometric realisation of $A,$ where rank of $A$ is equal to $3.$

Furthermore, for $\alpha \in P(\mathcal{M}),$ we set:
\[\alpha=\lambda(\alpha)\,\delta(\alpha),\] where $\lambda(\alpha) \in \mathbb{N}$ are the twisted coefficients of $\alpha,$ and
$(\delta(\alpha),\delta(\alpha))=2.$ So, $\tilde{P}(\mathcal{M})=\{\delta(a)=\alpha/\lambda(\alpha): \alpha \in P(\mathcal{M})\}.$

Notation: from now on, $\delta(\alpha_i)=\delta_i$ and $\lambda(\alpha_i)=\lambda_i.$

Now, $A$ and its geometric realisation are equivalent to a $(1+[n/2])\times n$ matrix $G(A):$
\[\textrm{1st raw: }\lambda_1,\ldots,\lambda_n\]
\[\textrm{(i+1)th raw: }-(\delta_1,\delta_{1+i}),\ldots,-(\delta_n,\delta_{n+1});~1 \le i \le [n/2]\]
\[\textrm{j\,th column: }(\lambda_j,(\delta_j,\delta_{j+1}),\ldots,(\delta_j,\delta_{j+[n/2]}))^t;~1 \le j \le n(mod\,n)\]

Let us illustrate this by giving a specific example:

\begin{example}
Let $\delta_1,\,\delta_2,\,\delta_3,\,\delta_4,\,\delta_5$ be elements with positive squares, each of them orthogonal
to exactly one side of a convex polytope, of five sides, in hyperbolic space. So, $1 \le i \le [5/2]$ and:
\[G(A)=  \left(\begin{array}{rrrrr} \lambda_1 & \lambda_2 & \lambda_3 & \lambda_4 & \lambda_5 \\ \delta_1\,\delta_2 & \delta_2\,\delta_3 &
\delta_3\,\delta_4 & \delta_4\,\delta_5 & \delta_5\,\delta_1 \\ \delta_1\,\delta_3 & \delta_2\,\delta_4 & \delta_3\,\delta_5 & \delta_4\,\delta_1 &\delta_5\,\delta_2\end{array}\right)\]
\end{example}

Our problem is to find all matrices $G(A),$ having the lattice Weyl vector $\rho,$ $\rho \in M \otimes \mathbb{Q},$ such that
$(\rho,\alpha)=-(\alpha,\alpha)/2 \Leftrightarrow (\rho,\delta_i)=-\lambda_i,\,i=1,\ldots,n.$ The answer has been presented
by Gritsenko and Nikulin in the following theorem (1.2.1. from \cite{3}):

\begin{theorem}
All geometric realisations $G(A),$ of hyperbolic generalised Cartan matrices $A$ of rank $3,$ of elliptic type, with
the lattice Weyl vector, which are twisted to symmetric generalised Cartan matrices, all twisting coefficients $\lambda_i$
satisfy:
\[\lambda_i \le 12\]
are given in Table 1 (from \cite{3}).
\end{theorem}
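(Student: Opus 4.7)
The plan is to recast the theorem as a classification of convex finite-area polygons in the hyperbolic plane together with compatible Weyl vectors, and then to use integrality and signature constraints to cap each $\lambda_i$ at $12$. Since the rank of $A$ is $3$, the form $S$ has signature $(2,1)$ and $\Lambda^+(M)$ is the hyperbolic plane, so the fundamental chamber $\mathcal{M}$ is a convex polygon whose sides are the mirrors $H_{\delta_i}$ with $(\delta_i,\delta_i)=2$. The elliptic hypothesis forces $n = |P(\mathcal{M})|$ finite and $\textrm{area}(\mathcal{M})$ finite.

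First I would enumerate the combinatorial types of $\mathcal{M}$. Adjacent mirrors must meet in $\overline{\Lambda^+(M)}$, which pins $(\delta_i,\delta_{i+1}) \in \{0,-1,-2\}$ (interior angles $\pi/2$, $\pi/3$, $0$ respectively), while non-adjacent pairs can have $(\delta_i,\delta_j)$ any integer $\le -2$. The Gauss--Bonnet identity
\[ (n-2)\pi - \sum_{i=1}^{n} \phi_i = \textrm{area}(\mathcal{M}) > 0 \]
then bounds $n$ from above and yields a short finite list of admissible angle sequences. For each such sequence I would symbolically record the Gram matrix $((\delta_i,\delta_j))$, leaving the non-adjacent inner products as free negative integer parameters.

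Second I would impose the lattice Weyl vector condition $(\rho,\delta_i) = -\lambda_i$. In a rank-$3$ ambient space, three linearly independent $\delta_i$'s determine $\rho$ via inversion of the corresponding $3 \times 3$ Gram block, and the remaining equations $(\rho,\delta_j) = -\lambda_j$ translate into linear relations among the $\lambda_j$ and the non-adjacent parameters. The timelike condition $(\rho,\rho)<0$ contributes a quadratic inequality, while the signature $(2,1)$ requirement on the full Gram matrix gives a system of principal-minor inequalities.

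Third, the numerical bound $\lambda_i \le 12$ would emerge from combining these constraints with the distance formula
\[ \sinh d_i = \frac{\lambda_i}{\sqrt{-2(\rho,\rho)}}, \]
which expresses how each $\lambda_i$ scales the hyperbolic distance from $\rho$ to the $i$th side. Finiteness of area caps the $d_i$, integrality of the $\lambda_i$ and of the Gram entries makes the resulting system Diophantine, and the signature conditions finally cut out a finite list of admissible tuples. The main obstacle will be the sheer enumeration: producing Table 1 requires working through every polygon type (triangles through hexagons) and every choice of non-adjacent parameters, and the sharp cap $12$, as opposed to some weaker a priori bound, only emerges after ruling out those marginal configurations whose Gram matrices just fail to have signature $(2,1)$ or whose $\delta_i$'s fail to be primitive.
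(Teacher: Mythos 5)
You should first be aware that this paper does not prove the theorem at all: it is Theorem 1.2.1 of Gritsenko--Nikulin \cite{3}, quoted as the starting point for the Conjecture, and the only ingredients of its proof that reappear here are the inequalities $0\le(\delta_1,\delta_2)\le 2$, $0\le(\delta_1,\delta_3)<14$, $0\le(\delta_2,\delta_3)\le 2$ for consecutive walls, together with the divisibility conditions $2\lambda_i\,|\,2\lambda_j(\delta_i,\delta_j)$ on the twisting coefficients. Your general setting is right (signature $(2,1)$, chamber a finite-volume polygon, roots normalised to square $2$, integrality forcing adjacent angles into $\{\pi/2,\pi/3,0\}$, $\rho$ determined by inverting a rank-$3$ Gram block), but the sketch has genuine gaps. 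The most serious one is logical: you treat $\lambda_i\le 12$ as a conclusion to be extracted from finite area plus the distance formula $\sinh d_i=\lambda_i/\sqrt{-2(\rho,\rho)}$, whereas in the theorem it is a \emph{hypothesis} that truncates the search; removing it is exactly the Conjecture stated immediately after the theorem, which is open and which Gritsenko--Nikulin support only by the heuristic that the computation up to $12$ produced nothing beyond $6$. Your proposed derivation cannot close this: an upper bound on $\lambda_i$ would require a priori upper bounds on both $\sinh d_i$ and $-(\rho,\rho)$, and neither is available --- finite area bounds nothing here, since the number of sides and the size of the polygon are not yet controlled.

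Two further steps fail as stated. Gauss--Bonnet does not bound $n$ from above: with all angles in $\{\pi/2,\pi/3,0\}$ the condition $\sum_i\phi_i<(n-2)\pi$ becomes easier to satisfy as $n$ grows, so there is no ``short finite list of admissible angle sequences''; the bound on the combinatorics comes only after the Weyl-vector and $\lambda_i\le 12$ constraints are imposed. And leaving the non-adjacent inner products as ``free negative integer parameters'' makes your enumeration infinite, because the signature-$(2,1)$ minor conditions do not bound them (two disjoint walls of a chamber can be arbitrarily far apart). The finiteness in \cite{3}, and in the four-step algorithm of this paper, rests on an explicit hyperbolic estimate derived from the lattice Weyl vector --- in rank $3$ the bound $|(\delta_1,\delta_3)|<14$ for next-to-adjacent walls quoted above --- combined with the divisibility conditions on the $\lambda_i$ (the $\nu_p$ inequalities of the second step), which you only gesture at via ``primitivity''. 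Without an ingredient of that kind, your second and third stages never terminate, and the comparison with Table 1 cannot even begin.
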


\begin{remark}
Table 1 (from \cite{3}) gives $60$ matrices. $7$ of them are of the compact case (they represent a convex polytope
of finite volume in hyperbolic space), the $4$ of which are untwisted. The rest $53$ matrices are of the non-compact
case.

All these matrices can be found in pages 166-168 , from \cite{3}.
\end{remark}

The main aim of this section is to discuss about the conjecture that follows the above theorem, which we give
below:

\begin{conjecture}
Table 1 (from \cite{3}) gives the complete list of hyperbolic generalised Cartan matrices $A$ of rank $3,$ of
elliptic type, with the lattice Weyl vector, which are twisted to symmetric generalised Cartan matrices.

In other words, one can drop the inequality $\lambda_i \le 12$ from theorem 1.2.1., from $\cite{3}.$
\end{conjecture}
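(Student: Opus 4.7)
The plan is to rule out, by contradiction, the existence of a geometric realisation $G(A)=(M,W,P(\mathcal{M}))$ of the required type whose twisting data contains some $\lambda_i \geq 13$. Since Gritsenko and Nikulin already prove that every matrix $A$ satisfying $\lambda_i \le 12$ appears in Table 1, the conjecture reduces to establishing an \emph{a priori} bound $\lambda_i \le 12$, which must be extracted from the combined constraints: (a) $A$ is of elliptic type, (b) the lattice Weyl vector $\rho$ exists, and (c) the ambient lattice $M$ has rank~$3$.

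First I would translate the problem into hyperbolic plane geometry. Since $\mathrm{rank}(M)=3$, the fundamental chamber $\mathcal{M}$ is a finite-area polygon in $\Lambda^+(M)$. The equation $(\rho, \delta_i) = -\lambda_i$ together with $(\delta_i, \delta_i) = 2$ and $(\rho, \rho) < 0$ converts $\lambda_i$ into $\sqrt{-2(\rho,\rho)} \, \sinh(d_i)$, where $d_i$ is the hyperbolic distance from $\rho$ to the $i$-th mirror $H_{\delta_i}$. Hence a bound on $\lambda_i$ is equivalent to bounding the product of $\sqrt{-(\rho,\rho)}$ and $\sinh(d_i)$, and any proof must control the two factors simultaneously.

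Next I would stratify by the combinatorial type of $\mathcal{M}$: the number $n$ of sides together with the tuple of dihedral angles $(\theta_1,\ldots,\theta_n)$. Crystallographic discreteness of $W$ forces each $\theta_j \in \{\pi/2,\pi/3,\pi/4,\pi/6,0\}$ (with $0$ for a cusp), and the Gauss-Bonnet formula $\mathrm{area}(\mathcal{M})=(n-2)\pi-\sum_j\theta_j > 0$ leaves only finitely many such tuples once $n$ is bounded, a bound itself supplied by ellipticity. Within each type the integers $\lambda_j$, the Gram numbers $(\delta_j,\delta_k)\in\mathbb{Z}$, and the rational vector $\rho \in M\otimes\mathbb{Q}$ are linked by a small Diophantine system: the angle $\theta_j$ fixes $(\delta_j,\delta_{j+1})=-2\cos\theta_j$ between adjacent sides, the remaining Gram entries must make the $n\times n$ Gram matrix have rank~$3$ and signature $(2,1)$, and the Weyl-vector equations determine $\rho$ linearly in three of the $\delta_j$ with coefficients depending on the $\lambda_j$.

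The main obstacle, in my view, is the step of producing an explicit absolute bound $\lambda_i \le \Lambda$ to make this enumeration effective. A candidate strategy is: fix the combinatorial type; solve the Weyl-vector system to express $\rho$ as a rational function of $(\lambda_1,\ldots,\lambda_n)$ and the angles; substitute into $(\rho,\rho)<0$ and into the integrality and primitivity conditions on $(M,S)$; then show that these force a Pell-type or denominator estimate bounding every $\lambda_i$ by an explicit constant (conjecturally $12$). Once such a $\Lambda$ is known, the algorithm advertised at the end of the paper, namely looping over combinatorial types, looping over $\lambda_j \le \Lambda$, checking the remaining integrality, ellipticity and lattice conditions and comparing with Table 1, becomes a finite computation which, if it terminates without producing any new matrix, proves the conjecture.
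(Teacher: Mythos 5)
You have not proved the conjecture, and it is important to be clear that the paper does not either: the statement is left open there, supported only by two heuristics (the abstract finiteness of the set of such matrices, which yields an ineffective bound $\lambda_i\le m$, and the empirical observation that the computation up to $\lambda_i\le 12$ produced only matrices with $\lambda_i\le 6$) and by a proposed four-step attack. That attack does not try to bound $\lambda_i$ a priori at all; instead it enumerates triples $\delta_1,\delta_2,\delta_3$ of roots orthogonal to consecutive sides of a chamber subject to the Gram-entry bounds $0\le(\delta_1,\delta_2)\le 2$, $0\le(\delta_1,\delta_3)<14$, $0\le(\delta_2,\delta_3)\le 2$ taken from the proof in \cite{3}, then determines admissible twisting coefficients from the divisibility conditions $(**)$, then solves for the lattice Weyl vector, and finally tries to complete each triple to a full polygon; even this is carried out only for the single $(\pi/2,\pi/3,0)$ triangle chamber. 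Your route is genuinely different in emphasis — you want to reinstate the missing effective bound $\lambda_i\le\Lambda$ directly — but the step that would do this, namely ``show that these force a Pell-type or denominator estimate bounding every $\lambda_i$ by an explicit constant (conjecturally $12$)'', is stated as a candidate strategy with no argument. That step \emph{is} the conjecture (given Theorem 1.2.1 of \cite{3}, the whole content is dropping the hypothesis $\lambda_i\le 12$), so the proposal is a research plan with the same hole the paper itself acknowledges, not a proof.

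There are also two technical points in your reduction that would need repair before the plan could even be made finite. First, the claim that discreteness forces the dihedral angles of $\mathcal{M}$ into $\{\pi/2,\pi/3,\pi/4,\pi/6,0\}$ is wrong in this setting: since the symmetrized roots satisfy $(\delta_i,\delta_i)=2$ and $(\delta_i,\delta_j)\in\mathbb{Z}_{\ge 0}$, adjacent mirrors meet at angle $\pi/2$ or $\pi/3$, or are parallel ($(\delta_i,\delta_j)=2$) or divergent ($(\delta_i,\delta_j)>2$); angles $\pi/4$ and $\pi/6$ cannot occur, while divergent pairs of sides — absent from your finite angle list — do occur, and controlling them is exactly why the paper works with the inequality $(\delta_1,\delta_3)<14$ rather than with an angle condition, so Gauss--Bonnet over a finite list of angle tuples does not by itself give finitely many combinatorial types. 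Second, you invoke a bound on the number of sides $n$ ``supplied by ellipticity'': ellipticity gives finitely many sides for each individual $A$, but a uniform bound on $n$ over all admissible $A$ is again only known through the same ineffective finiteness theorem you are trying to make effective, so it cannot be assumed at the start of the enumeration. Your translation $\lambda_i=\sqrt{-2(\rho,\rho)}\,\sinh(d_i)$ is correct and is a reasonable starting point, but as it stands the proposal neither closes the gap nor reproduces the paper's (also incomplete) enumeration strategy.
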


Gritsenko and Nikulin give, in the same paper, the following arguments for supporting the conjecture:
\begin{enumerate}

\item The number of all hyperbolic generalised Cartan matrices of elliptic type, with the lattice Weyl vector,
is finite, for rank greater than or equal to $3.$

So, there exists an absolute constant $m,$ such that $\lambda_i \le m.$

\item Calculations were done for all $\lambda_i \le 12,$ but result has only matrices with all $\lambda_i \le 6.$

So, there do not exist new solutions between $6$ and $12.$

\end{enumerate}

We will now propose a way of attacking the mentioned conjecture, in four
steps, giving the complete solution for the first proposed step.

FIRST STEP: We consider a triangle in hyperbolic space, with sides $a,\,b$ and $c.$ The angle between $a$ and $b$ is $\pi/2,$
between $a$ and $c$ is $\pi/3$ and between $c$ and $b$ is $0$ radians, that is, the vertex which is created from
the intersection of $c$ and $b$ is at infinity of our space. This triangle will be the fundamental chamber for
reflection in $\Lambda^+(M),$ and the reflections will cover all space, tending to infinity.

In the proof of theorem 1.2.1, the authors gave the following relations, which we will use:
\[0\le(\delta_1,\delta_2)\le 2,~0\le(\delta_1,\delta_3)< 14,~0\le(\delta_2,\delta_3)\le 2~~(*),\] for
$\delta_1,\,\delta_2,\,\delta_3 \in \tilde{P}(\mathcal{M})$ being
orthogonal vectors to three consecutive sides, of a polygon $A_1
\ldots A_n$ in $\Lambda^+(M),$ namely $A_1A_2,\,A_2A_3,\,A_3A_4.$

Our suggestion is to find all these $\delta_1,\,\delta_2,\,\delta_3$ satisying $(*),$ for the group generated
by reflections in $\Lambda^+(M),$ with fundamental chamber the triangle with sides $a,\,b,\,c.$

We fix $\delta_2=a.$ Then, we have the following possibilities for
$\delta_1:$

\[\delta_1=c\]
\[s_c(b)=b-2((b,c)/c^2)c=b+2c\]
\[s_{b+2c}(a)=a-2((a,b+2c)/(b+2c)^2)(b+2c)=a+2b+4c\]
etc.

In other words:
\[\delta_1=na+(n+1)b+2(n+1)c\]

Now, the posibilities for $\delta_3$ are:
\[s_b(c)=c-2((c,b)/b^2)b \Rightarrow \delta_3=2b+c\]
\[s_{2b+c}(-b)=-b-2((b,2b+c)/(2b+c)^2)(2b+c) \Rightarrow \delta_3=3b+2c\]
Also:
\[s_{3b+2c}(a)=a- 2((a,3b+2c)/(3b+2c)^2)(3b+2c)=a+6b+4c\]
\[s_{a+6b+4c}(-3b-2c)=(-3b-2c)-2((-3b-2c,a+6b+4c)/(a+6b+4c)^2)(a+6b+4c)=\]
\[2a+9b+6c\]
etc.

So,
\[\delta_3=(n+1)a+(3n+6)b+(2n+4)c\]

\begin{remark}
We fixed $\delta_2=a,$ and we looked for possible $\delta_1$ and $\delta_3,$ such that the angle
between $\delta_2$ and $\delta_1$ is accute; the same for the angle between $\delta_2$ and $\delta_3.$
\end{remark}

Below, we give the list of all possible $\delta_1$ and $\delta_3,$ for $\delta_2=a,$ such that
$(\delta_1,\delta_3)<14$ (in the right hand side we give $\delta_3$ and in the left $\delta_1):$

\[c,~b\]

\[b+2c,~b\]
\[a+2b+4c,~b\]
\[2a+3b+6c,~b\]
\[3a+4b+8c,~b\]
\[4a+5b+10c,~b\]
\[5a+6b+12c,~b\]

\[c,~2b+c\]
\[c,~3b+2c\]

\[c,~a+6b+4c\]
\[c,~2a+9b+6c\]
\[c,~3a+12b+8c\]

\[b+2c,~2b+c\]
\[a+2b+4c,~2b+c\]
\[2a+3b+6c,~2b+c\]
\[3a+4b+8c,~2b+c\]

\[b+2c,~3b+2c\]
\[a+2b+4c,~3b+2c\]
\[2a+3b+6c,~3b+2c\]

\[b+2c,~a+6b+4c\]

\[b+2c,~2a+9b+6c\]

We now fix $\delta_2=b,$ and give the list of all possible $\delta_1$ and $\delta_3,$ for $\delta_2=b,$ such that
$(\delta_1,\delta_3)<14$ (in the right hand side we give $\delta_3$ and in the left $\delta_1):$

\[c,~a\]

\[b+2c,~a\]
\[2b+3c,~a\]
\[3b+4c,~a\]
\[4b+5c,~a\]
\[5b+6c,~a\]
\[6b+7c,~a\]
\[7b+8c,~a\]
\[8b+9c,~a\]
\[9b+10c,~a\]
\[10b+11c,~a\]
\[11b+12c,~a\]
\[12b+13c,~a\]

\[c,~a+c\]
\[c,~2a+b+2c\]
\[c,~3a+2b+3c\]
\[c,~4a+3b+4c\]
\[c,~5a+4b+5c\]
\[c,~6a+5b+6c\]
\[c,~7a+6b+7c\]
\[c,~8a+7b+8c\]
\[c,~9a+8b+9c\]
\[c,~10a+9b+10c\]
\[c,~11a+10b+11c\]
\[c,~12a+11b+12c\]
\[c,~13a+12b+13c\]
\[c,~14a+13b+14c\]
\[c,~15a+14b+15c\]

\[b+2c,~a+c\]
\[b+2c,~2a+b+2c\]
\[b+2c,~3a+2b+3c\]
\[b+2c,~4a+3b+4c\]
\[b+2c,~5a+4b+5c\]
\[b+2c,~6a+5b+6c\]
\[b+2c,~7a+6b+7c\]

\[2b+3c,~a+c\]
\[2b+3c,~2a+b+2c\]
\[2b+3c,~3a+2b+3c\]
\[2b+3c,~4a+3b+4c\]
\[2b+3c,~5a+4b+5c\]

\[3b+4c,~a+c\]
\[3b+4c,~2a+b+2c\]
\[3b+4c,~3a+2b+3c\]

\[4b+5c,~a+c\]
\[4b+5c,~2a+b+2c\]
\[4b+5c,~3a+2b+3c\]

\[5b+6c,~a+c\]
\[5b+6c,~2a+b+2c\]

\[6b+7c,~a+c\]
\[6b+7c,~2a+b+2c\]

\[7b+8c,~a+c\]
\[7b+8c,~2a+b+2c\]

\[8b+9c,~a+c\]

\[9b+10c,~a+c\]

\[10b+11c,~a+c\]

\[11b+12c,~a+c\]

\[12b+13c,~a+c\]

\[13b+14c,~a+c\]

\[14b+15c,~a+c\]

Last, we fix $\delta_2=c,$ and give the list of all possible $\delta_1$ and $\delta_3,$ for $\delta_2=c,$ such that
$(\delta_1,\delta_3)<14$ (in the right hand side we give $\delta_3$ and in the left $\delta_1):$

\[b,~a\]

\[2b+c,~a\]
\[3b+2c,~a\]
\[4b+3c,~a\]
\[5b+4c,~a\]
\[6b+5c,~a\]
\[7b+6c,~a\]
\[8b+7c,~a\]
\[9b+8c,~a\]
\[10b+9c,~a\]
\[11b+10c,~a\]
\[12b+11c,~a\]
\[13b+12c,~a\]
\[14b+13c,~a\]

\[b,~2a+b+2c\]

\[b,~8a+6b+9c\]
\[b,~12a+9b+14c\]
\[b,~16a+12b+19c\]

\[2b+c,~2a+b+2c\]
\[2b+c,~3a+2b+3c\]
\[2b+c,~4a+3b+4c\]
\[2b+c,~5a+4b+5c\]
\[2b+c,~6a+5b+6c\]
\[2b+c,~7a+6b+7c\]
\[2b+c,~8a+7b+8c\]
\[2b+c,~9a+8b+9c\]
\[2b+c,~10a+9b+10c\]
\[2b+c,~11a+10b+11c\]

\[3b+2c,~2a+b+2c\]
\[3b+2c,~3a+2b+3c\]
\[3b+2c,~4a+3b+4c\]
\[3b+2c,~5a+4b+5c\]

\[4b+3c,~2a+b+2c\]
\[4b+3c,~3a+2b+3c\]

\[5b+4c,~2a+b+2c\]
\[5b+4c,~3a+2b+3c\]

\[6b+5c,~2a+b+2c\]

SECOND STEP: So, we have found $115$ triples of elements
$\delta_1,\,\delta_2,\,\delta_3,$ as we described above, and we
now need to find, for each case separately, the correspoding
twisting coefficients, $\lambda_1,\,\lambda_2,\,\lambda_3.$ So, we
introduce three new elements,
$\tilde{\delta_1}=\lambda_1\,\delta_1,\,\tilde{\delta_2}=\lambda_2\,\delta_2,\,\tilde{\delta_3}=
\lambda_3,\delta_3$

As we mentioned in the first section, any $\tilde{\delta_i}$ and $\tilde{\delta_j}$ should satisfy the relations:
\[\tilde{\delta_i}^2|\,2(\tilde{\delta_i},\tilde{\delta_j}) \Rightarrow 2\lambda_i|\,2\lambda_j(\delta_i,\delta_j)~(**)\]
\[\tilde{\delta_j}^2|\,2(\tilde{\delta_i},\tilde{\delta_j})  \Rightarrow 2\lambda_j|\,2\lambda_i(\delta_i,\delta_j)\]

Now, let us introduce some more notation; so, by $\nu_p(\lambda),$ we denote the power of the
prime number $p,$ in the prime factorisation of the natural number $\lambda.$ For example, $\nu_3(21)=1$
and $\nu_2(21)=0.$

Applying this notation to our case, we get that for our
$\tilde{\delta_1},\,\tilde{\delta_2},\,\tilde{\delta_3},$ the
following conditions should be satisfied:

\[|\nu_p(\lambda_1)-\nu_p(\lambda_2)| \le \nu_p(g_{12})\]
\[|\nu_p(\lambda_1)-\nu_p(\lambda_3)| \le \nu_p(g_{13})\]
\[|\nu_p(\lambda_2)-\nu_p(\lambda_3)| \le \nu_p(g_{23})\]

where $g_{12},\,g_{13},\,g_{23}$ are elements of the Gram matrix of $\delta_1,\,\delta_2,\,\delta_3,$ with
diagonal equal to $2$ and (by definition) $g_{ij}=g_{ji}.$

So, the information that we have is quite enough, for calculating $\lambda_1,\,\lambda_2,\,\lambda_3,$
for each of the $115$ triples that we calculated in (i), simply by working on the Gram matrix for each case.

Another way to do this calculations is via programming. Here we introduce a programme in MAPLE (with
some explanation following the programme):

with(numtheory);

[GIgcd, bigomega, cfrac, cfracpol, cyclotomic, divisors, factorEQ, factorset,

fermat, imagunit, index, integral\_basis, invcfrac, invphi, issqrfree,

jacobi, kronecker, lambda, legendre, mcombine, mersenne, migcdex, minkowski,

mipolys, mlog, mobius, mroot, msqrt, nearestp, nthconver, nthdenom,

nthnumer, nthpow, order, pdexpand, phi, pi, pprimroot, primroot, quadres,

rootsunity, safeprime, sigma, sq2factor, sum2sqr, tau, thue]

 x\_y:=2: x\_z:=4: y\_x:=2: y\_z:=2: z\_x:=4: z\_y:=2:

count:=0:

for x in divisors(lcm(x\_y*x\_z))do

 for y in divisors(lcm(y\_x*y\_z)) do

for z in divisors(lcm(z\_x,z\_y)) do

if modp(x\_y*y,x)=0 and

modp(x\_z*z,x)=0 and

 modp(y\_x*x,y)=0 and

modp(y\_z*z,y)=0 and

 modp(z\_x*x,z)=0 and

 modp(z\_y*y,z)=0 and igcd(x,y,z)=1

 then print(x,y,z);

count:=count+1;

 fi;

od;

 od;

od;

 printf ("Number of solutions %d",count);

1, 1, 1

1, 1, 2

1, 2, 1

1, 2, 2

1, 2, 4

2, 1, 1

2, 1, 2

2, 2, 1

4, 2, 1

Number of solutions 9

This programme works as follows; for each $\delta_1,\,\delta_2,\,\delta_3$ we apply $(**)$ and get six relations of the type:
\[x | x\_y * y\]
\[x | x\_z * z\]
\[y | y\_x * x\]
\[y | y\_z * z\]
\[z | z\_x * x\]
\[z | z\_y * y\]
where gcd$(x,y,z)=1.$ It is enough to examine the values of $x,$ which belong to the set $S_x,$ of the divisors of
the lcm$(x\_y*x\_z).$ In other words, lcm$(x,y,z)=x.$ The same for $y$ and $z,$ we have the sets $S_y$ and $S_z$
respectively, and check for which triples $(x,y,z)$ (with $x,\,y,\,z$ be longing to $S_x,\,S_y,\,S_z$ respactively) the conditions hold.

THIRD STEP: The third step is to find for each triple $\delta_1,\,\delta_2,\,\delta_3$ the lattice
Weyl vector. And we already mentioned, in the first section, that the following relation should be satisfied:
\[(\rho,\delta_i)=-\lambda_i,~1 \le i \le 3\]

Furthermore, it should be always true that $(\rho,\rho)<0.$

We are optimistic that we will have a computer
programme which can do this calculation, soon.

FOURTH STEP: In this step we will try to find more elements $\delta_i,$ which will be orthogonal to sides of polytopes in $\Lambda^+(M);$ polytopes to which
$\delta_1,\,\delta_2,\,\delta_3$ were orthogonal (each one perpendicular to a side of the polytope, respectively).

So, for each case (from steps (i)-(iii)) we should examine if one more side exists, where a new element $\delta_4$ is orthogonal to
this side. It will be vital to prove the existence of the lattice Weyl vector.

What we already know is that the determinant of the Gram matrix of $\delta_1,\,\delta_2,\,\delta_3,\,\delta_4$ should be equal to zero,
because $\delta_1,\,\delta_2,\,\delta_3$ should come from a $3-$D lattice, and so the four elements $\delta_1,\,\delta_2,\,\delta_3,\,\delta_4$ should
be linearly independent.

Also, $(**)$ should take the form:
\[(\lambda_4\,\delta_4)^2|\,2(\lambda_4\,\delta_4,\lambda_i\,\delta_i),\]
where $\delta_4=x_1\delta_1+x_2\delta_2+x_3\delta_3,\,x_i \in \mathbb{Q}.$

For the cases where $\delta_4$ does not exist, we stop there. If it exist, then we repeat the same thinking for a new element,
$\delta_5$ and so on.

We will have proved the conjecture to theorem 1.1.2, from \cite{1}, when we find all convex polytopes in the hyperbolic space,
which correspond to the matrices from Table 1 (from \cite{3}).

%%%%%%%%%%%%%%%%%%%%%%%%%%%%%%%%%%%%%%%%%%%%%%%%%%%%%%%%%%%%%%
%%% B I B L I O G R A P H Y
%%%%%%%%%%%%%%%%%%%%%%%%%%%%%%%%%%%%%%%%%%%%%%%%%%%%%%%%%%%%%%


\begin{thebibliography}{10}

\bibitem{1}
{\sc Viacheslav V. Nikulin}
\newblock {\em Reflection Groups in Hyperbolic Spaces and the Denominator Formula for Lorentzian Kac-Moody Algebras,}
\newblock Izv. Math., 1996, 60 (2), 305-334.

\bibitem{2}
{\sc E.B. Vinberg}
\newblock {\em Hyperbolic Reflection Groups,}
\newblock Russian Math. Surveys 40:1 (1985), 31-75.

\bibitem{3}
{\sc Valeri A. Gritsenko, Viacheslav V. Nikulin}
\newblock {\em Automorphic Forms and Lorentzian Kac-Moody Algebras, Part 1,}
\newblock International Journal of Mathematics, Volume 9, No. 2 (1998).

\bibitem{4}
{\sc James E. Humphreys}
\newblock {\em Introduction to Lie Algebras and Representation Theory,}
\newblock Springer-Verlag (1980).


\bibitem{5}
{\sc Viacheslav V. Nikulin}
\newblock {\em A Lecture on Kac-Moody Lie Algebras of Arithmetic Type ,}
\newblock Preprint alg-geom/9412003.

\bibitem{6}
{\sc Victor G. Kac}
\newblock {\em Infinite Dimensional Lie Algebras,}
\newblock Cambridge University Press, third edition (1990).

\end{thebibliography}
\end{document}